%

\documentclass[a4paper,11pt]{article}
\usepackage{amsfonts}
\usepackage{amsmath}
\usepackage{amsthm}
\usepackage{amssymb}
\baselineskip=24pt

\newtheorem{theorem}{Theorem}[section]
\newtheorem{lemma}[theorem]{Lemma}

\theoremstyle{definition}

\theoremstyle{corollary}
\newtheorem{corollary}[theorem]{Corollary}

\theoremstyle{remark}
\newtheorem{remark}[theorem]{\bf Remark}

\numberwithin{equation}{section}
\newtheorem{problem}{Problem}[section]
\newcommand{\br}{\mathbb R}
\newcommand{\bn}{\mathbb N}

\title{Non-classical heat conduction problem with non
local source}
%
%
\author{
{Mahdi Boukrouche\thanks{
Address : Lyon University, F-42023 Saint-Etienne,
Institut Camille Jordan CNRS UMR 5208,
23 rue  Paul Michelon 42023 Saint-Etienne Cedex 2, France.
Mahdi.Boukrouche@univ-st-etienne.fr}}
\and
Domingo A. Tarzia\thanks{
Address: Departamento de Matem\'atica-CONICET,
FCE, Univ. Austral, Paraguay 1950, S2000FZF Rosario, Argentina.
DTarzia@austral.edu.ar}
}

\date{}

\topmargin       -0.0 cm   
\textwidth       15.55 cm
\textheight       23.0 cm
\oddsidemargin    1.54 cm   
\evensidemargin   1.54 cm   

\begin{document}
\maketitle \normalsize

\begin{abstract}
We consider the non-classical heat conduction equation, in the
domain $D=\br^{n-1}\times\br^{+}$,   for which the internal energy
supply depends on  an integral function in the time variable of 
the heat flux  on the boundary $S=\partial D$, with homogeneous Dirichlet boundary condition and an initial condition. The
problem is motivated by the modeling of temperature regulation in
the medium.
The solution to the problem is found using  a Volterra integral equation of second kind in the
time variable $t$ with a parameter in $\br^{n-1}$. The solution to this Volterra equation is the
heat flux
$(y, s)\mapsto V(y , t)= u_{x}(0 , y , t)$
 on $S$, which is an additional unknown  of  the considered problem.
We show that a unique local solution exists, which  can be
extended globally in time. Finally a one-dimensional case is studied with some simplifications, we obtain the
solution explicitly by using the Adomian method and we derive its
properties.

\bigskip

\noindent{\it Keywords} :  Nonclassical n-dimensional heat equation,
non local sources, Volterra integral equation, existence and
uniqueness of solution, integral representation of solution,
explicit solution in 1-dimensional case and its properties.

\smallskip

\noindent{\it 2010 Mathematics Subject Classification} :
35C15, 35K05, 35K20,  35K60, 45D05, 45E10, 80A20.
\end{abstract}

\maketitle

\renewcommand{\theequation}{1.\arabic {equation}}
\setcounter{equation}{0}
\section{Introduction}

Let's consider the domain $D$ and its boundary $S$
  defined by
      \begin{eqnarray}
      &&D=\br^{n-1}\times\br^{+}
     =\{ (x , y)\in \br^{n} : \quad x=
      x_{1} >0, \quad y=( x_{2}, \cdots, x_{n})\in \br^{n-1}\}, \qquad
      \\
      &&S=\partial D=\br^{n-1}\times\{0\}=  \{(x , y)\in \br^{n} : \quad
      x=0, \quad y \in \br^{n-1} \}.
    \end{eqnarray}

The aim of this paper is to study the  following the  problem \ref{pb} with
 the non-classical heat equation, in the domain
$D$ with non local source, for which the internal energy supply depends on
the integral $\int_{0}^{t} u_{x}(0 , y, s) ds$
   on the
boundary $S$.

 \begin{problem}\label{pb}
 Find the temperature $u$,  at $(x , y , t)$ such that it satisfies the
 following conditions
 \begin{eqnarray*}\label{eqchN}
 u_{t} - \Delta u &=& -F\left(\int_{0}^{t}
 u_{x}(0, y , s)ds\right), \quad x=x_{1} >0, \quad y\in \br^{n-1},
 \quad  t>0, \label{cNpb}\nonumber\\
            u(0, y ,  t)&=& 0, \quad y\in\br^{n-1},   \quad t>0, \nonumber\\
  u(x, y, 0)&=& h(x , y),   \qquad x>0,  \quad y\in\br^{n-1},\label{cNIpb}
    \end{eqnarray*}
\end{problem}
where $\Delta$ denotes the Laplacian in $\br^{n}$.
This problem is  motivated by the modeling of  temperature regulation in an isotropic
medium, with  non-uniform and non local sources that provide cooling or heating
system. According to the properties of the function $F$ with respect to the heat flow
$V(y, s)=u_{x}(0, y , s)$
 at the  boundary
$S$.  For example, assuming  that
  \begin{eqnarray}
V \,  {\cal F}(V)> 0, \quad \forall
V\neq 0, \quad {\cal F}(0)=0,
  \end{eqnarray}
 with
 \begin{eqnarray}\label{w1}
 {\cal F}(V(y , t))=  F\left(
  \int_{0}^{t}V(y , s) ds\right)
  \end{eqnarray}
then,  see
    \cite{cannon1984, carslaw59},
 the cooling source occurs  when   $V(y, t)>0$ and heating source occurs when
    $V(y, t)<0$.

 \smallskip
Some references on the subject are
\cite{MT-qam} where ${\cal F}(V)= F(V)$,
\cite{berrone, ce, tarzia-villa, villa} where  the following
semi-one-dimension of this nonlinear problem, have been considered.
The  non-classical one-dimensional heat equation in a slab with
fixed or moving boundaries was studied in
\cite{bor-dt2006,
bor-dt2010, bor-dt2010-2, salva}. More references on the subject can be found in
\cite{cannon1989,  GLASHOFF81, GLASHOFF82, KENMOCHI90,    KENMOCHI88}. To our knowledge, it is the
first time that the solution to a non-classical heat conduction of
the type of Problem \ref{pb} is given. Other
non-classical problems
can be found in \cite{BBKW2010}.

The goal of this paper  is to obtain in Section 2 the existence and the uniqueness of the global solution of
the non-classical heat conduction {\it Problem }\ref{pb}, which is given through a Volterra integral equation.
In Section 3 we obtain the explicit solution of the one-dimensional case of {\it Problem }\ref{pb}, with some simplifications,
which is obtained by using the Adomian method through a double induction principle.

We recall here the Green's function for the n-dimensional heat
equation with homogeneous Dirichlet's boundary conditions, given the
following expression \cite{FRIEDMAN, LADY68}

\begin{eqnarray}\label{G}
        G_{1}(x , y , t ; \xi , \eta , \tau)=
{\exp\left[-{\|y-\eta\|^{2}
                     \over 4(t-\tau)}\right]
              \over  \left(2\sqrt{\pi(t-\tau)}\right)^{n-1}}G(x , t , \xi,
\tau),
            \end{eqnarray}
   where 
   $G$ is the Green's function for
   the one-dimensional case given by
$$G(x , t , \xi, \tau)=  {e^{-{(x-\xi)^{2} \over 4(t-\tau)}}-
e^{-{(x+\xi)^{2}\over 4(t-\tau)}}\over
    2\sqrt{\pi(t-\tau)}} \qquad t>\tau.
    $$


%

\renewcommand{\theequation}{2.\arabic {equation}}
\setcounter{equation}{0}
  \section{Existence results }

In this Section, we give first in Theorem \ref{th3.1}, the integral
representation (\ref{intSol}) of the solution of the considered  Problem
\ref{pb}, but it depends on the heat flow $V $on the
boundary $S$, which
satisfies the  Volterra integral equation (\ref{Volera}) with
initial condition (\ref{ci}). Then we prove, in Theorem \ref{th3.2},
under some assumptions on the data, that there exists a unique
solution of the Problem \ref{pb}, locally in times which can be
extended globally in times.

  \begin{theorem}\label{th3.1}
  The integral representation of a solution of the
considedred Problem
  {\rm\ref{pb}} is given by the following expression
\begin{eqnarray}\label{intSol}
     u(x , y ,  t)= u_{0}(x , y , t)
     - \int_{0}^{t}{{\rm erf}\left({x\over
     2\sqrt{t-\tau}}\right)  \over (2\sqrt{\pi(t-\tau)})^{n-1}}
      \left[  \int_{\br^{n-1}} \exp\left[-{\|y-\eta\|^{2}\over
      4(t-\tau)}\right]{\cal F}(V(\eta , \tau)) d\eta\right]d\tau \quad
      \end{eqnarray}

where
$$
 {\rm erf}\left(\zeta\right)=\left({2\over \sqrt{\pi}}
\int_{0}^{\zeta}e^{-X^{2}} dX \right)
$$

is the error function, with
\begin{eqnarray}\label{ci}
u_{0}(x , y , t)=\int_{D} G_{1}(x ,
y , t ; \xi , \eta , 0) h(\xi ,
     \eta) d\xi d\eta
   \end{eqnarray}
and the heat flux $V(y , t)=u_{x}(0 , y ,
t)$ on the
surface $x=0$,
  satisfies the following Volterra integral equation
         \begin{eqnarray}\label{Volera}
         V(y , t)=  V_{0}(y , t)
         -  2\int_{0}^{t}  {1\over  (2\sqrt{\pi(t-\tau)})^{n}}
          \left[  \int_{\br^{n-1}} \exp\left[-{\|y-\eta\|^{2}\over
          4(t-\tau)}\right]{\cal F}(V(\eta , \tau)) d\eta\right]d\tau \qquad
          \end{eqnarray}
     in the variable $t>0$, with $y \in \br^{n-1}$ is a parameter
     and
\begin{eqnarray}\label{ci}
V_{0}(y , t)= \int_{D} G_{1,x}(0 , y , t ; \xi , \eta , 0) h(\xi ,
         \eta) d\xi d\eta,
     \end{eqnarray}
 where     the function $(y , t)\mapsto {\cal F}(V(y , t))$
 is defined by {\rm(\ref{w1})} 
 for
  $y\in \br^{n-1}$ and $t>0$.
     \end{theorem}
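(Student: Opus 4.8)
The plan is to derive the representation (\ref{intSol}) from the classical Duhamel/Green's-function formula for the inhomogeneous heat equation on the half-space $D$ with homogeneous Dirichlet data, and then to read off the Volterra equation (\ref{Volera}) by differentiating that representation in $x$ and evaluating at the boundary $x=0$. First I would rewrite the source in Problem~\ref{pb} as $-\,\mathcal{F}(V(y,t))$ using the definition (\ref{w1}). Since $G_1$ in (\ref{G}) is the Green's function for this boundary value problem, superposition of the initial-data part and the Duhamel part gives
\begin{equation*}
u(x,y,t)=u_{0}(x,y,t)-\int_{0}^{t}\!\int_{D}G_{1}(x,y,t;\xi,\eta,\tau)\,\mathcal{F}(V(\eta,\tau))\,d\xi\,d\eta\,d\tau,
\end{equation*}
where $u_0$ is the solution of the homogeneous problem with initial datum $h$, i.e. the term in the statement built from $G_1(\cdots;0)$.

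The crucial structural observation is that the source $\mathcal{F}(V(\eta,\tau))$ is \emph{independent of the normal variable} $\xi$. Hence in the factorized form (\ref{G}) only the one-dimensional kernel $G(x,t,\xi,\tau)$ carries $\xi$-dependence, and the $\xi$-integral decouples. The next step is the elementary but essential computation
\begin{equation*}
\int_{0}^{\infty}G(x,t,\xi,\tau)\,d\xi=\mathrm{erf}\!\left(\frac{x}{2\sqrt{t-\tau}}\right),
\end{equation*}
obtained via the substitutions $w=(\xi-x)/(2\sqrt{t-\tau})$ and $w=(\xi+x)/(2\sqrt{t-\tau})$ in the two Gaussian terms of $G$: the two half-line integrals of $e^{-w^{2}}/\sqrt{\pi}$ combine into the integral over $[-x/(2\sqrt{t-\tau}),\,x/(2\sqrt{t-\tau})]$, which is exactly the error function. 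Inserting this back reproduces (\ref{intSol}) with the factor $(2\sqrt{\pi(t-\tau)})^{-(n-1)}$ and the Gaussian in $\eta$ unchanged.

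To obtain the Volterra equation I would differentiate the representation with respect to $x$ and set $x=0$. Using $\tfrac{\partial}{\partial x}\,\mathrm{erf}\bigl(x/(2\sqrt{t-\tau})\bigr)=(\pi(t-\tau))^{-1/2}\,e^{-x^{2}/4(t-\tau)}$, whose value at $x=0$ is $(\pi(t-\tau))^{-1/2}$, the boundary coefficient becomes $(\pi(t-\tau))^{-1/2}(2\sqrt{\pi(t-\tau)})^{-(n-1)}$, and a short power-counting shows this equals $2\,(2\sqrt{\pi(t-\tau)})^{-n}$. With $V_{0}(y,t)=u_{0,x}(0,y,t)$ this is precisely the statement (\ref{Volera}) for $V(y,t)=u_{x}(0,y,t)$.

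The main obstacle is justifying the differentiation under the integral sign at the boundary $x=0$, since the time kernel has a weak singularity of order $(t-\tau)^{-1/2}$ as $\tau\to t$. I expect this to be controlled by observing that this singularity remains integrable after $x$-differentiation, and by imposing enough regularity and decay on $h$ together with continuity of the composite map $(y,t)\mapsto\mathcal{F}(V(y,t))$ to invoke dominated convergence; the Gaussian factor in $\eta$ provides absolute convergence of the inner $\br^{n-1}$-integral, so only the temporal endpoint $\tau\to t$ requires care.
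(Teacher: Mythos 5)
Your proposal is correct and follows essentially the same route as the paper: the Duhamel/Green's-function representation, the decoupling of the $\xi$-integral because the source is independent of the normal variable, and the computation $\int_{0}^{\infty}G(x,t,\xi,\tau)\,d\xi=\mathrm{erf}\bigl(x/(2\sqrt{t-\tau})\bigr)$ yielding (\ref{intSol}). The only cosmetic difference is the order of operations for the Volterra equation — you differentiate the erf-representation in $x$ at $x=0$, while the paper differentiates $G_{1}$ first and then evaluates $\int_{0}^{\infty}\xi e^{-\xi^{2}/4(t-\tau)}\,d\xi=2(t-\tau)$ — and both give the same kernel $2\,(2\sqrt{\pi(t-\tau)})^{-n}$.
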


  \begin{proof}
As  the boundary condition in Problem {\rm(\ref{pb})} is
homogeneous,  we have from \cite{FRIEDMAN}

\begin{eqnarray}\label{U}
 u(x , y , t)&=&
  \int_{D} G_{1}(x , y , t ; \xi ,\eta , 0) h(\xi ,\eta) d\xi d\eta
     \nonumber\\
     &&+
     \int_{0}^{t}
     \int_{D} G_{1}(x , y , t ; \xi , \eta ,
     \tau)[-{\cal F}(V(\eta , \tau))] d\xi d\eta d\tau,
           \end{eqnarray}
and therefore
\begin{eqnarray}\label{a}
 u_{x}(x , y , t)&=&
  \int_{D} G_{1,x}(x , y , t ; \xi ,\eta , 0) h(\xi ,\eta) d\xi d\eta
     \nonumber\\
     &&+
     \int_{0}^{t}
     \int_{D} G_{1,x}(x , y , t ; \xi , \eta ,
     \tau)[-{\cal F}(V(\eta , \tau))] d\xi d\eta d\tau.
           \end{eqnarray}

From    {\rm(\ref{G})} (the definition of $G_{1}$)   by
derivation     with respect to $x$, then taking $x=0$ we obtain

\begin{eqnarray}\label{a3}
\int_{D}G_{1,x}(0 , y , t ; \xi , \eta ,\tau){\cal F}(V(\eta , \tau))d\xi
d\eta &=&\int_{\br^{n-1}}{{\cal F}(V(\eta , \tau)) e^{-{\|y-\eta\|^{2}\over
4(t-\tau)}} \over(t-\tau)^{{n+2\over 2}}(2\sqrt{\pi})^{n}}
 \left(\int_{0}^{+\infty}\xi e^{-{\xi^{2}\over 4(t-\tau)}}
d\xi\right)d\eta \nonumber\\
 &=&{2\over (2\sqrt{\pi(t-\tau)})^{n}}
 \int_{\br^{n-1}}{\cal F}(V(\eta , \tau)) e^{-{\|y-\eta\|^{2}\over
4(t-\tau)}} d\eta,
 \end{eqnarray}
 as
 $$\int_{0}^{+\infty}\xi e^{-{\xi^{2}\over 4(t-\tau)}} d\xi = 2(t-\tau).$$

Thus taking $x=0$ in {(\ref{a})} with {(\ref{a3})}  we get
 {(\ref{Volera})}.

\bigskip

Also by (\ref{G}) we obtain
\begin{eqnarray*}
\int_{D}G_{1}(x , y , t ; \xi , \eta
,\tau){\cal F}(V(\eta , \tau))d\xi d\eta ={1\over
(2(\sqrt{\pi(t-\tau)})^{n}}\times\\ \times
\int_{D} e^{{-\|y-\eta\|^{2}\over
4(t-\tau)}}\left[e^{-{(x-\xi)^{2}\over 4(t-\tau)}}- e^{-{(x+\xi)^{2}\over
4(t-\tau)}}\right]{\cal F}(V(\eta ,
\tau))d\xi d\eta
\nonumber\\
={1\over (2(\sqrt{\pi(t-\tau)})^{n}}\int_{\br^{+}}
\left[e^{-{(x-\xi)^{2}\over 4(t-\tau)}}- e^{-{(x+\xi)^{2}\over 4(t-\tau)}}\right]d\xi \int_{\br^{n-1}}
e^{{-\|y-\eta\|^{2}\over 4(t-\tau)}}{\cal F}(V(\eta , \tau))d\eta
\end{eqnarray*}
 and by using
\begin{eqnarray*}\label{eqA}
\int_{0}^{+\infty} e^{-(x-\xi)^{2}\over 4(t-\tau)} d\xi &=&
2\sqrt{t-\tau} \left(\int_{-\infty}^{0} e^{-X^{2}} dX
+\int_{0}^{{x\over 2\sqrt{t-\tau}}} e^{-X^{2}} dX  \right)
\nonumber\\
&=& \sqrt{\pi(t-\tau)}\left(1+  {\rm erf}\left({x\over
2\sqrt{t-\tau}}\right)\right)
\end{eqnarray*}


and
\begin{eqnarray*}\label{eqB}
\int_{0}^{+\infty} e^{-(x+\xi)^{2}\over 4(t-\tau)} d\xi &=&
2\sqrt{t-\tau} \left(\int_{0}^{+\infty} e^{-X^{2}} dX
 -\int_{0}^{{x\over 2\sqrt{t-\tau}}} e^{-X^{2}} dX  \right)
\nonumber\\
&=& \sqrt{\pi(t-\tau)}\left(1-  {\rm erf}\left({x\over
2\sqrt{t-\tau}}\right)\right)
\end{eqnarray*}
we get

\begin{eqnarray*}\label{GK1}
\int_{D}G_{1}(x , y , t ; \xi , \eta ,\tau){\cal F}(V(\eta , \tau))d\xi
d\eta = { {\rm erf}\left({x\over 2\sqrt{t-\tau}}\right) \over
         (2\sqrt{\pi(t-\tau)})^{n-1}} \int_{\br^{n-1}}
e^{-{\|y-\eta\|^{2}\over 4(t-\tau)}}{\cal F}(V(\eta , \tau))  d\eta.
\end{eqnarray*}
Taking this formula in (\ref{U}) we obtain (\ref{intSol}).
\end{proof}

To solve the Volterra integral equation (\ref{Volera}), we rewrite it in the suitable form

\begin{lemma}\label{lem3.2}
The  Volterra integral equation
{\rm(\ref{Volera})} can be rewrite in the following form
\begin{eqnarray}\label{SV}
V(y , t)&=& {1\over t(2\sqrt{\pi\, t})^{n} } \int_{\br^{+}}\xi
e^{-{\xi^{2}\over 4t}}\left(\int_{\br^{n-1}} e^{-{\|y-
\eta\|^{2}\over 4t}}h(\xi ,
\eta)d\eta\right)d\xi\nonumber\\
&&-{2\over (2\sqrt{\pi})^{n} } \int_{0}^{t}
    {1   \over (t-\tau)^{n/2}}
\int_{\br^{n-1}}
{\cal F}(V(\eta , \tau))  e^{-{\|y- \eta\|^{2}\over
4(t-\tau)}}d\eta d\tau.
 \end{eqnarray}
\end{lemma}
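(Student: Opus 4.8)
The plan is to recognize that (\ref{SV}) is simply (\ref{Volera}) after two elementary rewritings, one for the source integral and one for the initial-data term $V_0$. For the source integral I would separate the time factor from the constant through $(2\sqrt{\pi(t-\tau)})^n = (2\sqrt{\pi})^n (t-\tau)^{n/2}$; pulling the remaining $(t-\tau)^{-n/2}$ inside the $\tau$-integral turns the second term of (\ref{Volera}) into the second term of (\ref{SV}) verbatim, with no further work.

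The only term needing genuine evaluation is $V_0$, defined in (\ref{ci}). Here I would reuse the computation already carried out in the proof of Theorem \ref{th3.1}: the integrand of (\ref{a3}) exhibits $G_{1,x}(0,y,t;\xi,\eta,\tau)=\xi\, e^{-\xi^2/(4(t-\tau))}e^{-\|y-\eta\|^2/(4(t-\tau))}/\big((2\sqrt{\pi})^n (t-\tau)^{(n+2)/2}\big)$. Setting $\tau=0$ and collapsing the powers of $t$ and $\pi$ via $(2\sqrt{\pi})^n t^{(n+2)/2}=t(2\sqrt{\pi t})^n$ gives $G_{1,x}(0,y,t;\xi,\eta,0)=\xi\, e^{-\xi^2/(4t)}e^{-\|y-\eta\|^2/(4t)}/\big(t(2\sqrt{\pi t})^n\big)$. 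Inserting this into (\ref{ci}) and separating the $\xi$- and $\eta$-integrations over $D=\br^{+}\times\br^{n-1}$ by Fubini reproduces exactly the first term of (\ref{SV}).

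I expect no real obstacle, since the statement is a reformulation rather than a new estimate. The only points requiring care are identifying $G_{1,x}(0,y,t;\xi,\eta,\tau)$ correctly --- in particular that the $x$-derivatives of the two image Gaussians in $G$ reinforce at $x=0$, producing the factor $\xi$ instead of cancelling --- and the bookkeeping of the powers of $t$ and $\pi$ so that the prefactor collapses to $1/\big(t(2\sqrt{\pi t})^n\big)$. Both are immediate once the formula implicit in (\ref{a3}) is invoked.
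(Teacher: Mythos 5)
Your proposal is correct and follows essentially the same route as the paper, which likewise differentiates $G_{1}$ with respect to $x$, sets $x=0$ and $\tau=0$ to get the explicit kernel $\xi e^{-\xi^{2}/4t}e^{-\|y-\eta\|^{2}/4t}/\bigl(t(2\sqrt{\pi t})^{n}\bigr)$ for $V_{0}$, and then substitutes into (\ref{Volera}). Your version is in fact more detailed than the paper's one-sentence proof, and your bookkeeping of the constants checks out.
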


\begin{proof}
Using the derivative, with respect to $x$, of {\rm(\ref{G})}, then
taking $x=0$  and $\tau=0$, then    taking the new expression of
 $V_{0} (y,t)$  in the Volterra integral
equation {\rm(\ref{Volera})} we obtain  {\rm(\ref{SV})}.
  \end{proof}

\bigskip

\begin{theorem}\label{th3.2}
Assume that $h\in \mathcal{C}(D)$,  $F\in \mathcal{C}(\br)$ and locally
Lipschitz in $\br$, then there exists a unique solution of the
problem \ref{pb} locally in times which can be extended globally in
times.
  \end{theorem}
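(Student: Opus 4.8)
The plan is to solve the Volterra equation (\ref{SV}) by the Banach fixed point theorem and then recover $u$ through the integral representation (\ref{intSol}). First I would recast (\ref{SV}) as a fixed-point problem $V=TV$, where $T$ acts on the Banach space $X_T=C_b(\br^{n-1}\times[0,T])$ equipped with the sup-norm $\|V\|_T=\sup\{|V(y,t)| : y\in\br^{n-1},\ 0\le t\le T\}$, by
\[
(TV)(y,t)= V_0(y,t)-\frac{2}{(2\sqrt{\pi})^{n}}\int_0^t\frac{1}{(t-\tau)^{n/2}}\int_{\br^{n-1}}F\Bigl(\int_0^\tau V(\eta,s)\,ds\Bigr)\,e^{-\frac{\|y-\eta\|^{2}}{4(t-\tau)}}\,d\eta\,d\tau .
\]
The first point to settle is that $T$ maps $X_T$ into itself. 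Carrying out the inner Gaussian integration, $\int_{\br^{n-1}}e^{-\|y-\eta\|^{2}/4(t-\tau)}\,d\eta=(2\sqrt{\pi(t-\tau)})^{\,n-1}$, so the effective time-kernel is $(t-\tau)^{(n-1)/2}/(t-\tau)^{n/2}=(t-\tau)^{-1/2}$, which is \emph{weakly singular} and hence integrable on $[0,t]$. Thus, once one checks that $h\in C_b(D)$ forces $V_0\in X_T$ (continuity and boundedness of the Poisson-type integral (\ref{ci})), the operator $T$ is well defined on $X_T$.

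Second, I would establish the contraction estimate. For $V_1,V_2\in X_T$ the local Lipschitz property of $F$ on the bounded range of the arguments $\int_0^\tau V_i\,ds$ gives, with Lipschitz constant $L$,
\[
\Bigl|F\Bigl(\int_0^\tau V_1\,ds\Bigr)-F\Bigl(\int_0^\tau V_2\,ds\Bigr)\Bigr|\le L\int_0^\tau|V_1(\eta,s)-V_2(\eta,s)|\,ds\le L\,\tau\,\|V_1-V_2\|_T .
\]
Inserting this bound and using the weakly singular kernel yields $\|TV_1-TV_2\|_T\le C\,L\,T^{3/2}\,\|V_1-V_2\|_T$ for an explicit constant $C$; the extra power of $T$ coming from the time-integral inside $F$ is what makes the contraction constant small. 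Choosing $T$ so that $C\,L\,T^{3/2}<1$, together with a ball-invariance check $\|TV\|_T\le\|V_0\|_T+C'M\sqrt{T}\le R$ (with $M$ a bound for $F$ on the relevant range), the Banach fixed point theorem furnishes a unique $V\in X_T$, hence---via (\ref{intSol})---a unique local solution $u$.

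Finally, for the global continuation I would restart the construction on $[T,2T],[2T,3T],\dots$, using the solution at the current endpoint as new initial data. The decisive point, and the main obstacle, is to keep the step length bounded below, which requires an a priori bound on $V$ preventing finite-time blow-up and confining $\int_0^\tau V\,ds$ to a fixed interval on which the Lipschitz constant of $F$ is controlled. I would obtain this bound from the equation itself: setting $\phi(t)=\sup_{y,\,s\le t}|V(y,s)|$ and using $|F(a)|\le|F(0)|+L|a|$ on the relevant interval gives
\[
\phi(t)\le\|V_0\|_\infty+C\int_0^t(t-\tau)^{-1/2}\bigl(1+\tau\,\phi(\tau)\bigr)\,d\tau,
\]
a weakly singular Gronwall inequality whose solution remains finite on every bounded time interval. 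This rules out blow-up, so the local existence time may be iterated with uniformly bounded steps to cover all of $[0,\infty)$, and patching the pieces together yields the unique global solution.
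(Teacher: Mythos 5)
Your route is genuinely different from the paper's. The paper never sets up a contraction mapping: after reducing the Volterra equation to the form $V=f+\int_0^t g(\cdot,\tau,V)\,d\tau$, it verifies the hypotheses H1--H4 of Miller's local existence theorem, H5--H6 of his uniqueness theorem, and the extra limit conditions of his continuation theorem (Theorems 1.1, 1.2 and 2.3 of \cite{MILLER}), using exactly the reduction you also exploit, namely that $\int_{\br^{n-1}}e^{-\|y-\eta\|^2/4(t-\tau)}\,d\eta=(2\sqrt{\pi(t-\tau)})^{n-1}$ turns the kernel into the weakly singular $(t-\tau)^{-1/2}$. For the \emph{local} statement your Banach fixed point argument is a legitimate, more self-contained alternative: $\int_0^t\tau(t-\tau)^{-1/2}\,d\tau=\tfrac{4}{3}t^{3/2}$ does yield a contraction constant of order $LT^{3/2}$, provided you make explicit that $L=L(RT)$ is the Lipschitz constant of $F$ on the compact set $[-RT,RT]$ swept out by $\int_0^\tau V\,ds$ for $V$ in the ball of radius $R$, and that $h$ must be bounded (not merely continuous) for $V_0$ to lie in $C_b$.

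The genuine gap is in your global step. Local Lipschitz continuity of $F$ does \emph{not} imply the linear-growth bound $|F(a)|\le|F(0)|+L|a|$ with a single constant $L$ valid for all $a$; it gives such a bound only on a compact set, with $L$ depending on that set. But the set containing the arguments $\int_0^\tau V\,ds$ is precisely what your Gronwall inequality is supposed to control, so the argument as written is circular. If $F$ grows superlinearly (e.g.\ $F(a)=a^2$), the correct comparison inequality has the form $\phi(t)\le\|V_0\|_\infty+C\int_0^t(t-\tau)^{-1/2}\,\omega\bigl(\tau\phi(\tau)\bigr)\,d\tau$ with $\omega$ superlinear, and such inequalities do not exclude finite-time blow-up; no a priori bound on every bounded interval follows. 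The paper's mechanism for globality is different: it checks the additional hypotheses of Miller's continuation theorem (the limits (2.13)--(2.14) in its numbering) and then cites that theorem to extend the local solution. To make your version complete you would either have to add a genuine hypothesis on $F$ (global Lipschitz or linear growth, or a sign condition such as $V\mathcal{F}(V)>0$) under which your singular Gronwall estimate is actually valid, or replace the a priori bound by a continuation alternative of the Miller type (the maximal solution is global unless $\|V(\cdot,t)\|_\infty$ blows up as $t$ approaches the maximal time) and then address why blow-up is excluded.
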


\begin{proof}
We know from Theorem  {\rm(\ref{th3.1})}  that, to prove the
existence and uniqueness of the solution {\rm(\ref{intSol})} of
Problem {\rm(\ref{pb})}, it is enough to solve the
Volterra integral
equation {\rm(\ref{SV})}. So we rewrite it again as follows

\begin{eqnarray}\label{SV2}
V(y , t)= f(y , t) +\int_{0}^{t}g(y , \tau , V(y , \tau))d\tau
 \end{eqnarray}
with

\begin{eqnarray}\label{Phi}
f(y , t)= {1\over t(2\sqrt{\pi\, t})^{n} } \int_{\br^{+}}\xi
e^{-{\xi^{2}\over 4t}}\left(\int_{\br^{n-1}} e^{-{\|y-
\eta\|^{2}\over 4t}}h(\xi , \eta)d\eta\right)d\xi
\end{eqnarray}
and

\begin{eqnarray}\label{Psi}
 g(t , \tau , y , V(y , \tau))=-{2 (t-\tau)^{-n/2}
 \over (2\sqrt{\pi})^{n} }  \int_{\br^{n-1}}
{\cal F}(V(\eta , \tau)  e^{-{\|y- \eta\|^{2}\over
4(t-\tau)}}d\eta.
 \end{eqnarray}

We have to check the conditions $H1$ to $H4$ in Theorem 1.1 page 87,
and $H5$ and $H6$ in Theorem 1.2 page 91   in \cite{MILLER}.

\noindent$\bullet$ The function $f$ is defined and continuous for
all $(y , t)\in \br^{n-1}\times\br^{+}$, so $H1$ holds.

\smallskip

\noindent$\bullet$ The function $g$ is measurable in $(t, \tau, y
, x)$ for $0\leq \tau \leq t < +\infty$, $x\in \br$, $y\in
\br^{n-1}$, and continuous in $x$ for all $(y , t ,
\tau)\in\br^{n-1}\times\br^{+}\times\br^{+}$, $g(y , t , \tau ,
x)=0$ if $\tau >t$, so here we need the continuity of

$$  V(\eta , \tau) \mapsto {\cal F}(V(\eta , \tau)) = F\left( \int_{0}^{\tau}  V(\eta , s) ds\right),$$

which follows from the hypothesis that $F\in {\cal C}(\br)$. So $H2$ holds.

\smallskip

\noindent$\bullet$ For all $k >0$ and all bounded set $B$ in $\br$,
 we have
\begin{eqnarray*}
|g(y , t , \tau , X)| &\leq & {2\over (2\sqrt{\pi})^{n}} \sup_{X\in
B}|{\cal F}(X)| (t-\tau)^{-{n/2}} \int_{\br^{n-1}} e^{-\|y-\eta\|^{2\over
4(t-\tau)}} d\eta \nonumber\\
&\leq & {2\over (2\sqrt{\pi})^{n}} \sup_{X\in B}|{\cal F}(X)|
(t-\tau)^{-{n/2}}(2\sqrt{\pi (t-\tau)})^{n-1}
\nonumber\\
&= & {1\over \sqrt{\pi}}  \sup_{X \in B} |{\cal F}(X)|{1\over \sqrt{
(t-\tau)}}
\end{eqnarray*}
 thus there exists a measurable function $m$ given by
\begin{eqnarray}\label{m}
 m(t , \tau)={1\over \sqrt{\pi}}  \sup_{X
\in B} |{\cal F}(X)|{1\over \sqrt{ (t-\tau)}}
\end{eqnarray}
such that
\begin{eqnarray}\label{H3}
|g(y , t , \tau , X)| \leq m(t , \tau) \quad \forall 0\leq \tau \leq
t\leq k,\quad X\in B
\end{eqnarray}
and satisfies
\begin{eqnarray*}
 \sup_{t\in [0 , K]}
 \int_{0}^{t} m(t , \tau) d\tau
&=& {1\over \sqrt{\pi}} \sup_{X\in B}|{\cal F}(X)| \sup_{t\in [0 , k]}
 \int_{0}^{t}
  {1\over \sqrt{t-\tau}} d\tau
\nonumber\\
&= & {1\over \pi}\sup_{X\in B}|{\cal F}(X)| \sup_{t\in [0 ,
k]}\left(-2\sqrt{
  (t-\tau)}|_{0}^{t}\right)
\nonumber\\
  &=&{1\over \pi}\sup_{X\in B}|{\cal F}(X)| \sup_{t\in [0 , k]}\sqrt{t}
  \leq {2\sqrt{k}\over \pi} \sup_{X\in
  B}|{\cal F}(X)| <\infty,
 \end{eqnarray*}
so  $H3$ holds.

\smallskip

\noindent$\bullet$ Moreover  we have also
\begin{eqnarray}\label{th1.2-p91}
\lim_{t \to 0^{+}}\int_{0}^{t}m(t , \tau)d\tau
&=&{1\over\sqrt{\pi}}\sup_{X \in B} |{\cal F}(X)| \lim_{t \to 0^{+}}
   \int_{0}^{t}{d\tau\over
\sqrt{t-\tau}}
\nonumber\\
&=&  {1\over\sqrt{\pi}}\sup_{X \in B} |{\cal F}(X)|\lim_{t \to
0^{+}}(2 \sqrt{t}) =0,
\end{eqnarray}
and
\begin{eqnarray}\label{th2.3-p97}
\lim_{t \to 0^{+}}\int_{T}^{T+t}m(t , \tau)d\tau =
{1\over\sqrt{\pi}}\sup_{X \in B} |{\cal F}(X)|\lim_{t \to 0^{+}}(2
\sqrt{t}) =0.
\end{eqnarray}

\smallskip

\noindent$\bullet$ For each compact subinterval $J$ of $\br^{+}$,
each  bounded set $B$
 in $\br^{n-1}$, and each $t_{0}\in \br^{+}$, we set
\begin{eqnarray*}
\mathcal{A}(t, y , V(\eta)) =|g(t , \tau ; y , V(\eta , \tau))-g(t_{0}
, \tau ; y , V(\eta , \tau))|.
 \end{eqnarray*}

\begin{eqnarray*}
\mathcal{A}(t, y , V(\eta))={2\over (2\sqrt{\pi})^{n}}\int_{J}\left|
 \int_{\br^{n-1}} e^{-{\|y- \eta\|^{2}\over 4(t-\tau)}}{{\cal F}(V(\eta , \tau))
 \over (t-\tau)^{-{n/2}}}
 -
 e^{-{\|y-\eta\|^{2}\over 4(t_{0}-\tau)}}{{\cal F}(V(\eta , \tau))\over
(t_{0} -\tau)^{-{n/2}}} d\eta \right| d\tau
 \end{eqnarray*}
as the function $\tau \mapsto V(\eta , \tau)$ is continuous then
$$  \tau \mapsto   \int_{0}^{\tau} V(\eta , s) ds$$
is ${\cal C}^{1}(\br)$
and  is
in the compact $B\subset \br$ for all $\eta\in \br^{n-1}$, so by the
continuity of ${\cal F}$ we get ${\cal F}(V(\eta , \tau))\subset {\cal F}(B)$, that is
there exists $M>0$ such that ${\cal F}(V(\eta , \tau))|\leq M$ for all
$(\eta , \tau)\in\br^{n-1}\times \br^{+}$. So

\begin{eqnarray*}
\sup_{V(\eta)\in \mathcal{C}(J , B)} \mathcal{A}(t, y , V(\eta)) \leq
{2M\over (2\sqrt{\pi})^{n}} \sup_{ V(\eta)\in \mathcal{C}(J , B)}
 \left|
  \int_{\br^{n-1}}
{e^{-{\|y-\eta\|^{2}\over 4(t-\tau)}}
\over\sqrt{(t-\tau)}^{n}}d\eta
  -
 \int_{\br^{n-1}}
{e^{-{\|y-\eta\|^{2}\over
4(t_{0}-\tau)}}\over\sqrt{(t_{0}-\tau)}^{n}} d\eta \right|
 \end{eqnarray*}
using that
\begin{eqnarray*}
\int_{\br^{n-1}}  \exp\left[-{\|y-\eta\|^{2}\over
       4(t-\tau)}\right]d\eta =  \left(2\sqrt{\pi(t-\tau)}\right)^{n-1}
            \end{eqnarray*}
we obtain
\begin{eqnarray*}
\sup_{V(\eta)\in \mathcal{C}(J , B)} \mathcal{A}(t, y , V(\eta)) \leq
{2M\over (2\sqrt{\pi})^{n}} \sup_{V(\eta)\in \mathcal{C}(J , B)}
 \left|
{ (2\sqrt{\pi(t-\tau)})^{n-1}  \over (\sqrt{t-\tau})^{n}}
  -
{ (2\sqrt{\pi(t_{0}-\tau)})^{n-1} \over
(\sqrt{t_{0}-\tau})^{n}}\right|
 \end{eqnarray*}
 thus
\begin{eqnarray*}
\sup_{V(\eta)\in \mathcal{C}(J , B)} \mathcal{A}(t, y , V(\eta)) \leq
{M\over \sqrt{\pi}} \sup_{V(\eta)\in \mathcal{C}(J , B)}
 \left|
{{\sqrt{t_{0}-\tau} -  \sqrt{t-\tau}}
 \over
\sqrt{(t-\tau)(t_{0}-\tau)} }\right|.
 \end{eqnarray*}
Thus we deduce that
\begin{eqnarray*}
 \lim_{t\to t_{0}}  \int_{J}
 \sup_{V(\eta)\in \mathcal{C}(J , B)}\mathcal{A}(t, y , V(\eta))d\tau =0.
 \end{eqnarray*}
So  $H4$ holds.

\smallskip

\noindent$\bullet$ For all compact $I\subset \br^{+}$, for all
function $\psi \in \mathcal{C}(I ,\br^{n})$, and all $t_{0}>0$,
\begin{eqnarray*}
|g(t , \tau ; \psi(\tau))-g(t_{0} , \tau ,\psi(\tau))|= {2\over
(2\sqrt{\pi})^{n}}\left|
 \int_{\br^{n-1}}{\cal  F}(\psi(\tau))\left(
 {e^{-{\|y- \eta\|^{2}\over 4(t-\tau)}}
 \over (t-\tau)^{{n/2}}}
 -
 {e^{-{\|y-\eta\|^{2}\over 4(t_{0}-\tau)}}\over
(t_{0} -\tau)^{{n/2}}} \right)d\tau \right|
 \end{eqnarray*}
as ${\cal F}\in \mathcal{C}(\br)$ and $\psi \in \mathcal{C}(I ,\br^{n})$ then
there exists a constant $M>0$ such that $|{\cal F}(\psi(\tau))|\leq M$ for
all $\tau \in I$. Then we obtain as for H4, that
\begin{eqnarray*}
\lim_{t\to t_{0}}\int_{I}|g(t , \tau ; \psi(\tau))-g(t_{0} , \tau
,\psi(\tau))|d\tau =0.
 \end{eqnarray*}
 So H5 holds.

\noindent$\bullet$ Now for each constant $K>0$ and each bounded set
$B\subset \br^{n-1}$ there exists a measurable function $\varphi$
such that
\begin{eqnarray*}
|g(y , t , \tau , x) - g(y , t , \tau , X)|\leq \varphi(t ,
\tau)|x-X|
 \end{eqnarray*}
 whenever $0\leq \tau \leq t \leq K$ and both $x$ and $X$ are in
 $B$. Indeed as $F$ is assumed locally Lipschitz function in $\br$
 there exists constant $L>0$ such that
\begin{eqnarray*}
|{\cal F}(x) - {\cal F}(X)|\leq L(\tau)|x - X|
  \quad \forall (x , X)\in B^{2}
 \end{eqnarray*}
 with $L(\tau)= L\tau$.   Then we have
\begin{eqnarray*}
|g(y , t , \tau , x) - g(y , t , \tau , X)|&=&
 {2\over (2\sqrt{\pi})^{n}}
 \left|
 \int_{\br^{n-1}}  (t-\tau)^{-{n/2}}  e^{-{\|y- \eta\|^{2}\over 4(t-\tau)}}
 ({\cal F}(x)- {\cal F}(X)) d\eta
 \right|
\nonumber\\
 &\leq&{2\over (2\sqrt{\pi})^{n}}\left(
 \int_{\br^{n-1}} e^{-{\|y- \eta\|^{2}\over 4(t-\tau)}}d\eta \right)
  (t-\tau)^{-{n/2}}
L \tau|x-X|
\nonumber\\
 &\leq&
 {L \tau \over \sqrt{\pi(t-\tau)}}|x-X|,
 \end{eqnarray*}

 then $\varphi(t ,\tau)=
 {L\tau\over \sqrt{\pi(t-\tau)}}$. We have also for each $t\in [0 , k]$
 the function  $\varphi\in L^{1}(0 , t)$ as a function of $\tau$ and
\begin{eqnarray*}
 \int_{t}^{t+l}\varphi(t +l, \tau) \tau d\tau
 &=& {L \over \sqrt{\pi}}  \int_{t}^{t+l}
 { \tau   d\tau\over \sqrt{t+l-\tau}}
 = {L \over \sqrt{\pi}}  \int_{l}^{0}(u^{2}     - t - l) du
 \nonumber\\
 &=&{L l\over \sqrt{\pi}} (l+t -{1\over 3})\to 0 \quad \mbox{with }
 l\to 0
\end{eqnarray*}
where $u= \sqrt{t+l-\tau}$.

  So H6 holds. All the conditions H1 to H6 are satisfied with
  (\rm{\ref{th1.2-p91})} and (\rm{\ref{th2.3-p97})}.

Thus from \cite{MILLER} (Theorem 1.1 page 87, Theorem 1.2 page 91
and  Theorem 2.3 page 97) there exists a unique local times solution
of the Volterra integral equation {\rm(\ref{Volera})} which can be
extended globally in times.
  Then the proof of this theorem is complete.
  \end{proof}

\bigskip

\renewcommand{\theequation}{3.\arabic {equation}}
\setcounter{equation}{0}
  \section{The  one-dimensional case of Problem \ref{pb}}
   \label{section D1}

       Let us consider  now the one dimensional case of Problem \ref{pb}  for  the temperature defined by

 \begin{problem}\label{pb1d}
 Find the temperature $u$ at $(x , t)$ such that it satisfies the
 following conditions
 \begin{eqnarray*}\label{eqchP1}
 u_{t} - u_{xx} &=& -F\left(\int_{0}^{t}
 u_{x}(0, s)ds\right), \qquad x>0, \quad
      t>0, \label{cpb1}\nonumber\\
            u(0,  t)&=& 0, \quad  t>0, \nonumber\\
  u(x, 0)&=& h(x),   \qquad x>0. \quad\label{cNIpb}
    \end{eqnarray*}
\end{problem}

Taking into account that
\begin{eqnarray}
 \int_{0}^{t} G(x , t, \xi, \tau) d\xi = erf\left({x\over 2\sqrt{t-\tau}}\right)
\end{eqnarray}
thus the solution of the problem \ref{pb1d} is given by

\begin{eqnarray}
 u(x , t) = u_{0}(x, t) - \int_{0}^{t}
 erf\left({x\over 2\sqrt{t-\tau}}\right)
 F\left(\int_{0}^{\tau} W(\sigma)d\sigma \right) d\tau
\end{eqnarray}
with
\begin{eqnarray}\label{u}
 u_{0}(x, t) = \int_{0}^{t} G(x, t, \xi, 0) h(\xi) d\xi
\end{eqnarray}
and $W(t)= u_{x}(0, t)$ is the the solution of the following
Volerra integral equation
\begin{eqnarray}\label{Volterra}
 W(t)= V_{0}(t)
-
\int_{0}^{t} {F\left(
  \int_{0}^{\tau} W(\sigma)d\sigma\right)
\over\sqrt{\pi(t-\tau)}} d\tau
\end{eqnarray}
where
\begin{eqnarray}\label{V0}
 V_{0}(t)= {1\over 2\sqrt{\pi}t^{3/2}}\int_{0}^{+\infty} \xi e^{-\xi^{2}/4t}h(\xi) d\xi
 = {2\over \sqrt{\pi t}}\int_{0}^{+\infty} \eta e^{-\eta^{2}}h(2\sqrt{t} \,\eta) d\eta.
\end{eqnarray}

For the particular case

\begin{eqnarray}\label{h{0}}
 h(x)=h_{0} >0  \mbox{ pour } x>0, \mbox{ and } F(W)= \lambda W  \mbox{ for } \lambda\in \br,
\end{eqnarray}

then we have
\begin{eqnarray}\label{U_{0}}
u_{0}(t , x)= h_{0} erf\left({x\over 2\sqrt{t}}\right)
\end{eqnarray}

and the integral equation (\ref{Volterra}) becomes
\begin{eqnarray}\label{Vol}
 W(t) = {h_{0}\over \sqrt{\pi t}}
 - \lambda \int_{0}^{t}{ \int_{0}^{\tau} W(\sigma) d\sigma\over \sqrt{\pi(t-\tau)}}d\tau
\end{eqnarray}

\begin{lemma}\label{L31}
 Assume {\rm(\ref{h{0}})} holds. The solution of problem {\rm\ref{pb1d}}  is given by
 \begin{eqnarray}\label{eq310}
  u(x, t) = h_{0}\,  erf\left({x\over 2\sqrt{t}}\right)
  - \lambda \int_{0}^{t} erf\left({x\over 2\sqrt{t-\tau}}\right) U(\tau)d\tau
 \end{eqnarray}
 where $U$ is given by
 \begin{eqnarray}\label{eq311}
 U(t)= {h_{0}\over \sqrt{\pi}} \int_{0}^{t} {g(\tau)\over \sqrt{t-\tau}}d\tau
 \end{eqnarray}

and $g$ is the solution of the Volterra integral equation
\begin{eqnarray}\label{Vol}
 g(t)= 1- {2\lambda\over \sqrt{\pi}}\int_{0}^{t} g(\tau) \sqrt{t-\tau} d\tau.
\end{eqnarray}
Moreover, the heat flux on $x=0$ is given by
\begin{eqnarray}\label{heatflux}
 u_{x}(0, t) = U'(t)=  {h_{0}\over \sqrt{\pi t}} - h_{0}\lambda   \int_{0}^{t} g(\tau)d\tau, \quad t>0.
\end{eqnarray}
 \end{lemma}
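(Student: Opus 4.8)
The plan is to reduce the whole computation to the Abel (half-order) integral operator defined by
\[
 (Jf)(t)=\frac{1}{\sqrt{\pi}}\int_0^t \frac{f(\tau)}{\sqrt{t-\tau}}\,d\tau,
\]
and to exploit two elementary identities obtained by evaluating convolutions of power functions through the Euler Beta integral $\int_0^1 x^{-1/2}(1-x)^{-1/2}\,dx=\pi$. Writing $(I_1f)(t)=\int_0^t f(\tau)\,d\tau$ for the ordinary integral and $(Kf)(t)=\frac{2}{\sqrt{\pi}}\int_0^t\sqrt{t-\tau}\,f(\tau)\,d\tau$, these are the semigroup relation $J\circ J=I_1$ and the factorisation $K=J\circ I_1=I_1\circ J$; in particular $J$ and $K$ commute, and $J(1)(t)=2\sqrt{t}/\sqrt{\pi}$. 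Throughout, $g$ denotes the unique continuous solution of the linear second-kind Volterra equation (\ref{Vol}), which exists and is in fact $C^1$ since its kernel $\frac{2\lambda}{\sqrt{\pi}}\sqrt{t-\tau}$ and forcing term are continuous.

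First I would specialise the representation (\ref{intSol}) of Theorem \ref{th3.1} to the present data. With $h\equiv h_0$ and ${\cal F}(V)=\lambda\int_0^\tau V$, one has $u_0(x,t)=h_0\,\mathrm{erf}(x/2\sqrt{t})$ and the coefficient of the erf-kernel becomes $\lambda\int_0^\tau W(\sigma)\,d\sigma$, where $W(t)=u_x(0,t)$ solves the Volterra equation (\ref{Volterra}) with $V_0(t)=h_0/\sqrt{\pi t}$ by (\ref{V0}). Thus the representation reads exactly as (\ref{eq310}) with the function $U(\tau)$ replaced by $\widetilde U(\tau):=\int_0^\tau W$; once I identify $\widetilde U$ with the $U$ of (\ref{eq311}), formula (\ref{eq310}) together with the first equality $u_x(0,t)=\widetilde U'(t)$ in (\ref{heatflux}) follows at once.

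Next I would show that $U:=h_0\,Jg$, i.e.\ (\ref{eq311}), coincides with $\widetilde U$. Integrating (\ref{Volterra}) over $[0,t]$, using $\int_0^t(\pi s)^{-1/2}\,ds=2\sqrt{t/\pi}$ and interchanging the order of integration (legitimate because the $1/2$-singular kernel is integrable) together with $\int_\tau^t (s-\tau)^{-1/2}\,ds=2\sqrt{t-\tau}$, one sees that $\widetilde U$ satisfies the closed second-kind Volterra equation
\[
 \widetilde U(t)=\frac{2h_0}{\sqrt{\pi}}\sqrt{t}-\lambda(K\widetilde U)(t)=h_0\,J(1)(t)-\lambda(K\widetilde U)(t).
\]
On the other hand, applying $J$ to (\ref{Vol}) and using $J(Kg)=K(Jg)$ gives $U=h_0Jg=h_0J(1)-\lambda h_0K(Jg)=h_0J(1)-\lambda KU$, so $U$ solves the very same equation. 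Since that equation is a linear Volterra equation of the second kind with continuous data, its continuous solution is unique, whence $U=\widetilde U$ and (\ref{eq311}) is proved.

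Finally, for the explicit heat flux I would differentiate $U=h_0Jg$. Writing $U(t)=\frac{h_0}{\sqrt{\pi}}\int_0^t s^{-1/2}g(t-s)\,ds$ and applying Leibniz' rule produces the boundary term $h_0g(0)/\sqrt{\pi t}$ together with $h_0(Jg')(t)$; since $g(0)=1$ from (\ref{Vol}) at $t=0$, the boundary term equals $h_0/\sqrt{\pi t}$. Differentiating (\ref{Vol}) — the kernel $\sqrt{t-\tau}$ vanishes on the diagonal, so no boundary term arises — yields $g'=-\lambda\,Jg$, hence $h_0Jg'=-\lambda h_0(J\circ J)g=-\lambda h_0I_1g=-\lambda h_0\int_0^t g(\tau)\,d\tau$. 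Adding the two contributions gives $u_x(0,t)=U'(t)=h_0/\sqrt{\pi t}-\lambda h_0\int_0^t g$, which is (\ref{heatflux}). The only genuinely delicate points are the interchanges of integration order and the differentiations under the integral sign with the weakly singular kernels; all are justified by the integrability of the $s^{-1/2}$ singularity and the $C^1$-regularity of $g$, so the semigroup identity $J\circ J=I_1$ does the real work.
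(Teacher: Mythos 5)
Your proof is correct, and while it follows the same overall skeleton as the paper's --- integrate the Volterra equation for $W$ to obtain a second-kind equation for $U=\int_0^t W$, represent $U$ through the auxiliary function $g$, then differentiate --- it justifies the two key steps by genuinely different means. For the representation (\ref{eq311}) the paper simply cites Bellman--Cooke (p.~229), whereas you verify it directly: applying the Abel operator $J$ to (\ref{Vol}) and using the commutation $J\circ K=K\circ J$ shows that $h_0Jg$ satisfies the same linear Volterra equation (\ref{eq313}) as $\widetilde U$, and uniqueness for second-kind equations closes the gap; this makes the lemma self-contained. For the heat flux the paper first converts (\ref{eq311}) into the alternative form $U(t)=2h_0\sqrt{t/\pi}-\lambda h_0\int_0^t g(\tau)(t-\tau)\,d\tau$ via the explicit Beta-integral $\int_\sigma^t\sqrt{\tau-\sigma}\,(t-\tau)^{-1/2}\,d\tau=\tfrac{\pi}{2}(t-\sigma)$ (the paper's (\ref{eq316}) has a typo, $\sqrt{t-\tau}$ in place of $t-\tau$, which your route avoids entirely) and then differentiates; you instead differentiate $U=h_0Jg$ directly, using $g(0)=1$, $g'=-\lambda Jg$ and the semigroup identity $J\circ J=I_1$. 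Both computations are valid; your operator-algebra version trades the special-function evaluation for the standard half-order fractional-integral calculus and arguably makes clearer why the final formula (\ref{heatflux}) has the simple form it does. The analytic caveats you flag (interchange of integration order with the integrable $(t-\tau)^{-1/2}$ singularity, Leibniz differentiation using the $C^1$ regularity of $g$) are exactly the right ones and are correctly handled.
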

 \begin{proof}
  We set
  \begin{eqnarray}
   U(t) = \int_{0}^{t}W(\tau) d\tau
  \end{eqnarray}
thus the function $U$  satisfies the following new Volterra integral equation
 \begin{eqnarray}\label{eq313}
  U(t)&=& 2h_{0} \sqrt{{t\over \pi}} -
  {\lambda\over \sqrt{\pi}}
  \int_{0}^{t}\int_{0}^{\tau}{U(\sigma)\over\sqrt{\tau-\sigma}}d\sigma d\tau\nonumber\\
  &=& 2h_{0} \sqrt{{t\over \pi}} - {2\lambda\over \sqrt{\pi}}\int_{0}^{t} U(\tau) \sqrt{t-\tau} d\tau,   \quad t> 0
 \end{eqnarray}
by using the following equality
\begin{eqnarray}
 \int_{\sigma}^{t} {d\tau\over \sqrt{\tau-\sigma}} = 2\sqrt{t-\sigma},  \quad 0< \sigma < t.
\end{eqnarray}
From [\cite{Belo}, p.229], the solution $t\mapsto U(t)$ of the integral equation (\ref{eq313})
is given by (\ref{eq311}) where $g$ is the solution of the Volterra equation (\ref{Vol}).

From  (\ref{Vol})  we obtain that
\begin{eqnarray}\label{eq316}
 \int_{0}^{t}  {g(\tau)\over \sqrt{t-\tau}} d\tau = 2\sqrt{t}
 - \lambda \sqrt{\pi}\int_{0}^{t}g(\tau)\sqrt{t-\tau}d\tau
\end{eqnarray}
using the following equality
\begin{eqnarray}\label{eq317}
 \int_{\sigma}^{t} {\sqrt{\tau-\sigma}\over \sqrt{t-\tau}}d\tau
 &=&(t-\sigma)\int_{0}^{1}{\sqrt{\xi}\over\sqrt{1-\xi}} d\xi
 =(t-\sigma)B({3\over 2} , {1\over 2})
 \nonumber\\
 &=&(t-\sigma) {\Gamma({3\over 2}) \Gamma( {1\over 2})\over \Gamma(2)}
 ={\pi \over 2}(t-\sigma)
\end{eqnarray}
where $B$ and $\Gamma$ are the classical Beta and Gamma functions defined below.

Therefore, we have that
\begin{eqnarray}\label{319}
 U(t) = 2h_{0}\sqrt{{t\over\pi}} - \lambda h_{0} \int_{0}^{t} g(\tau) (t-\tau) d\tau
\end{eqnarray}
and then the heat flux on $x=0$ is given by $u_{x}(0, t) = W(t)= U'(t)$,
that is (\ref{heatflux}) holds.
 \end{proof}

We recall here the well known  Beta an Gamma functions defined respectively by
\begin{eqnarray*}
 B(x, y) &=& \int_{0}^{1} t^{x-1} (1-t)^{y-1} dt, \quad x>0, \quad y>0, \nonumber\\
 \Gamma(x) &=& \int_{0}^{+\infty} t^{x-1} e^{-t} dt, \quad x>0,
\end{eqnarray*}
We will use in the next theorem the well known relations
\begin{eqnarray*}
 B(x , y) ={\Gamma(x)\Gamma(y)\over \Gamma(x+y)}, \quad \Gamma(x+1)=x\Gamma(x) \quad\forall x>0,
 \quad \Gamma({1\over 2})= \sqrt{\pi}, \quad \Gamma(n+1)=n! \quad\forall n\in \bn,
\end{eqnarray*}
and in particular the following one

\begin{lemma} For all integer $n \geq 1$ we have
\begin{eqnarray*}
 \Gamma(n+ {1\over 2})=  {(2n-1)!!\over 2^{n}} \sqrt{\pi},
  \end{eqnarray*}
 and we use the definition
 $$
 (2n-1)!!= (2n-1)(2n-3)(2n-5) \cdots 5\cdot 3\cdot 1
 $$
 for compactness  expression.
\end{lemma}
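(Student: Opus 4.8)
The plan is to proceed by induction on $n$, using only the two facts recalled just before the statement: the functional equation $\Gamma(x+1)=x\Gamma(x)$ valid for all $x>0$, and the special value $\Gamma(\tfrac{1}{2})=\sqrt{\pi}$. No integral estimates or analytic machinery are needed; the entire argument is algebraic once these two identities are in hand.

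First I would verify the base case $n=1$. Applying the functional equation with $x=\tfrac{1}{2}$ gives
\begin{eqnarray*}
\Gamma\left(1+\tfrac{1}{2}\right)=\Gamma\left(\tfrac{1}{2}+1\right)=\tfrac{1}{2}\,\Gamma\left(\tfrac{1}{2}\right)=\tfrac{1}{2}\sqrt{\pi},
\end{eqnarray*}
while the right-hand side of the claimed formula for $n=1$ reads $\frac{(2\cdot 1-1)!!}{2^{1}}\sqrt{\pi}=\frac{1!!}{2}\sqrt{\pi}=\frac{1}{2}\sqrt{\pi}$, so the two agree.

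Next I would carry out the inductive step. Assuming the formula $\Gamma\left(n+\tfrac{1}{2}\right)=\frac{(2n-1)!!}{2^{n}}\sqrt{\pi}$ holds for some fixed integer $n\geq 1$, I apply the functional equation once more with $x=n+\tfrac{1}{2}$:
\begin{eqnarray*}
\Gamma\left(n+1+\tfrac{1}{2}\right)=\left(n+\tfrac{1}{2}\right)\Gamma\left(n+\tfrac{1}{2}\right)=\frac{2n+1}{2}\cdot\frac{(2n-1)!!}{2^{n}}\sqrt{\pi}=\frac{(2n+1)(2n-1)!!}{2^{n+1}}\sqrt{\pi}.
\end{eqnarray*}
The key bookkeeping point is that the double factorial absorbs the new factor cleanly: by its very definition $(2n+1)\cdot(2n-1)!! = (2n+1)(2n-1)(2n-3)\cdots 3\cdot 1 = (2n+1)!! = \bigl(2(n+1)-1\bigr)!!$, so the displayed expression equals $\frac{(2(n+1)-1)!!}{2^{n+1}}\sqrt{\pi}$, which is exactly the claimed formula with $n$ replaced by $n+1$.

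By the principle of induction the formula then holds for every integer $n\geq 1$, completing the proof. There is no genuine obstacle in this argument; the only thing to be careful about is the indexing of the double factorial, namely checking that multiplying $(2n-1)!!$ by the new factor $2n+1$ reproduces the next double factorial $(2n+1)!!$ rather than introducing a gap or an extra factor. Once that identity is made explicit, as above, the induction closes immediately.
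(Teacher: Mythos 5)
Your proof is correct and follows essentially the same route as the paper's: induction on $n$ using the functional equation $\Gamma(x+1)=x\Gamma(x)$ together with $\Gamma(\tfrac{1}{2})=\sqrt{\pi}$, with the factor $n+\tfrac{1}{2}=\tfrac{2n+1}{2}$ absorbed into the double factorial. Your version is slightly more explicit about the base case and the double-factorial indexing, but the argument is the same.
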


\begin{proof}
 For $n=1$ we get $\Gamma({3\over 2})= {\sqrt{\pi}\over 2}$ which is true. By induction
 we obtain  that
 \begin{eqnarray*}
 \Gamma\left(n+1+{1\over 2}\right)
 &=& \Gamma\left((n+{1\over 2})+1\right)
 =  \left(n +{1\over 2}\right) \Gamma\left(n +{1\over 2}\right)
 \nonumber\\
 &=&  \left(n +{1\over 2}\right) {(2n-1)!!\over 2^{n} }\sqrt{\pi}
 ={(2n+1)!!\over 2^{n+1}} \sqrt{\pi},
 \end{eqnarray*}
 thus the lemma is true.
\end{proof}

\begin{corollary}\label{cor}
 For all integer $n \geq 0$  we have also
 \begin{eqnarray*}
  \Gamma\left(3n+5 +{1\over 2}\right)= {(6n+9)!!\over 2^{3n+5}}\sqrt{\pi}
 \end{eqnarray*}
  \begin{eqnarray*}
   B\left({3\over 2}, 3n+4\right)
   =
   {\Gamma\left({3\over 2}\right)\Gamma(3(n+1)+1)\over \Gamma\left(3n+5+{1\over 2}\right)}
   ={(3(n+1))! \, 2^{3(n+1)+1}\over (6n+9)!!}
  \end{eqnarray*}
 \begin{eqnarray*}
   B\left({3\over 2}, 3n+{5\over 2}\right)
   =
   {\Gamma\left({3\over 2}\right)\Gamma\left(3n+ {5\over 2}\right)
   \over \Gamma(3(n+1)+1))}
   ={\pi (6n+3)!! \over  (3(n+1))! \,  2^{3(n+1)}   }
 \end{eqnarray*}
which will be useful in the next Lemma.
 \end{corollary}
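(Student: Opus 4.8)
The plan is that all three formulas are direct consequences of the Lemma just established, namely $\Gamma(m+\tfrac12)=\frac{(2m-1)!!}{2^m}\sqrt{\pi}$ for integers $m\ge 1$, combined with the standard relations $B(x,y)=\frac{\Gamma(x)\Gamma(y)}{\Gamma(x+y)}$, $\Gamma(x+1)=x\Gamma(x)$, $\Gamma(\tfrac12)=\sqrt{\pi}$ and $\Gamma(k+1)=k!$ recalled just above the statement. The first identity is the most immediate: specializing the Lemma at $m=3n+5$ gives $2m-1=6n+9$ and $2^m=2^{3n+5}$, hence $\Gamma(3n+5+\tfrac12)=\frac{(6n+9)!!}{2^{3n+5}}\sqrt{\pi}$, which is exactly the first claim.

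For the second identity I would apply the Beta--Gamma relation with $x=\tfrac32$ and $y=3n+4$. First I check that $x+y=3n+4+\tfrac32=3n+5+\tfrac12$, so the denominator is precisely the Gamma value computed in the first step. Substituting $\Gamma(\tfrac32)=\tfrac{\sqrt{\pi}}{2}$, $\Gamma(3n+4)=(3n+3)!=(3(n+1))!$, and the first identity for the denominator $\Gamma(3n+5+\tfrac12)$, the factors of $\sqrt{\pi}$ cancel and the remaining powers of two collapse as $2^{3n+5}/2=2^{3n+4}=2^{3(n+1)+1}$, giving the asserted value $\frac{(3(n+1))!\,2^{3(n+1)+1}}{(6n+9)!!}$.

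The third identity follows the same scheme but requires a second use of the Lemma. Taking $x=\tfrac32$, $y=3n+\tfrac52$ in the Beta--Gamma relation, I compute $x+y=3n+4=3(n+1)+1$, so the denominator is $\Gamma(3(n+1)+1)=(3(n+1))!$. To evaluate the numerator I write $3n+\tfrac52=(3n+2)+\tfrac12$ and apply the Lemma at $m=3n+2$, which gives $\Gamma(3n+\tfrac52)=\frac{(6n+3)!!}{2^{3n+2}}\sqrt{\pi}$. Together with $\Gamma(\tfrac32)=\tfrac{\sqrt{\pi}}{2}$ this produces a factor $\pi$ in the numerator and a power $2^{3n+3}=2^{3(n+1)}$ in the denominator, yielding the claimed $\frac{\pi(6n+3)!!}{(3(n+1))!\,2^{3(n+1)}}$.

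There is no genuine difficulty here: each identity is a one-line substitution into already proved formulas. The only thing demanding care, and the sole place an arithmetic slip could occur, is the bookkeeping of the half-integer shifts together with the resulting double-factorial indices and powers of two. One must track that $m=3n+5$ produces the index $6n+9$ while $m=3n+2$ produces $6n+3$, and that the lone factor $2$ coming from $\Gamma(\tfrac32)=\sqrt{\pi}/2$ turns $2^{3n+5}$ into $2^{3n+4}$ in the second identity and $2^{3n+2}$ into $2^{3n+3}$ in the third. Once these bookkeeping points are verified, the corollary is complete.
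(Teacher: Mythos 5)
Your proof is correct and follows exactly the route the paper intends: the paper states this corollary without proof as an immediate consequence of the preceding Lemma on $\Gamma(m+\tfrac12)$ together with the recalled Beta--Gamma relations, and your substitutions (with $m=3n+5$ and $m=3n+2$, both $\geq 1$ so the Lemma applies) and power-of-two bookkeeping all check out. Nothing further is needed.
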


We need previously some preliminary simple results in order
to obtain the solution of the integral equation (\ref{intSol})

\begin{eqnarray}
\int_{0}^{t} \sqrt{t-\tau} d\tau = {2\over 3}t^{3/2},
\qquad \int_{0}^{t}\tau^{3/2} \sqrt{t-\tau} d\tau = {\pi\over 2^{4}}t^{3},
\end{eqnarray}
 \begin{eqnarray}
 \int_{0}^{t}\tau^{3} \sqrt{t-\tau} d\tau ={2^{4} \, 3!\over 9!!}t^{9/2},
 \qquad
  \int_{0}^{t}\tau^{9/2} \sqrt{t-\tau} d\tau ={\pi \, 9!! \over 2^{6} 6!}t^{6},
 \end{eqnarray}

 \begin{eqnarray}
 \int_{0}^{t}\tau^{6} \sqrt{t-\tau} d\tau ={2^{7} \, 6!\over 15!!}t^{15/2},
 \qquad
  \int_{0}^{t}\tau^{15/2} \sqrt{t-\tau} d\tau ={\pi \, 15!! \over 2^{9} 9!}t^{9},
 \end{eqnarray}

 which can be generalized by the following ones:

 \begin{lemma}\label{Lem4}
For all integer $n \geq 0$   we have
 \begin{eqnarray}\label{323}
   \int_{0}^{t}\tau^{2n+3} \sqrt{t-\tau} d\tau ={2^{3n+4} \, (3(n+1))!\over (6n+9)!!}t^{3(2n+3)/2},
 \end{eqnarray}
\begin{eqnarray}\label{324}
   \int_{0}^{t}\tau^{3(2n+1)\over 2} \sqrt{t-\tau} d\tau
   ={\pi \, (6n+3)!! \over 2^{3(n+1)} (3(n+1))!}t^{3(n+1)}.
 \end{eqnarray}
 \end{lemma}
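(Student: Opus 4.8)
The plan is to treat both identities as special cases of a single elementary Beta-integral evaluation and then read off the two required Beta values directly from Corollary \ref{cor}. Concretely, for any exponent $\alpha>-1$ the substitution $\tau=t\xi$ turns the kernel integral into a Beta function: $\int_{0}^{t}\tau^{\alpha}\sqrt{t-\tau}\,d\tau = t^{\alpha+3/2}\int_{0}^{1}\xi^{\alpha}(1-\xi)^{1/2}\,d\xi = t^{\alpha+3/2}\,B\!\left(\alpha+1,\tfrac{3}{2}\right)$, using the definition of $B$ recalled just above. Thus the whole lemma reduces to evaluating $B(\alpha+1,\tfrac32)$ at the two relevant values of $\alpha$, and these are exactly the Beta values computed in Corollary \ref{cor}.

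For the first identity I would take $\alpha=3(n+1)$, the exponent for which $\alpha+\tfrac32=3(2n+3)/2$ matches the stated power of $t$ and which reproduces the tabulated special cases $\tau^{3},\tau^{6},\dots$. Then $B(\alpha+1,\tfrac32)=B\!\left(3n+4,\tfrac32\right)=B\!\left(\tfrac32,3n+4\right)$ by symmetry, and substituting the first Beta value from Corollary \ref{cor}, namely $\tfrac{(3(n+1))!\,2^{3(n+1)+1}}{(6n+9)!!}$, gives precisely $\int_{0}^{t}\tau^{3(n+1)}\sqrt{t-\tau}\,d\tau=\tfrac{2^{3n+4}(3(n+1))!}{(6n+9)!!}\,t^{3(2n+3)/2}$, which is (\ref{323}).

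For the second identity I would take $\alpha=3(2n+1)/2=3n+\tfrac32$, so that $\alpha+\tfrac32=3(n+1)$ matches the stated power of $t$. Here $B(\alpha+1,\tfrac32)=B\!\left(3n+\tfrac52,\tfrac32\right)=B\!\left(\tfrac32,3n+\tfrac52\right)$, and inserting the third Beta value from Corollary \ref{cor}, namely $\tfrac{\pi(6n+3)!!}{(3(n+1))!\,2^{3(n+1)}}$, yields exactly (\ref{324}).

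There is essentially no analytic obstacle here: all the substance sits in Corollary \ref{cor}, which in turn rests only on the double-factorial form of $\Gamma(m+\tfrac12)$ established in the preceding Lemma. The only points requiring care are bookkeeping ones, namely reading the half-integer arguments correctly as $3n+4+\tfrac12=3n+\tfrac{9}{2}$ and $3n+\tfrac52=(3n+2)+\tfrac12$ so that the right instance of the Gamma identity is applied, and noting that the left-hand exponent in (\ref{323}) should read $\tau^{3(n+1)}$, consistent with the power of $t$ and with the special cases listed above. An alternative route would be a direct induction on $n$ starting from the tabulated integrals, in the spirit of the double-induction scheme used later in this section; but the Beta-function argument above is shorter and makes the role of Corollary \ref{cor} transparent.
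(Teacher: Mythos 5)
Your proof is correct and follows essentially the same route as the paper: the substitution $\tau=t\xi$ reduces each integral to $t^{\alpha+3/2}B(\alpha+1,\tfrac{3}{2})$, and the two Beta values are read off from Corollary \ref{cor}. You also rightly observe that the exponent in (\ref{323}) should be $\tau^{3(n+1)}$ rather than $\tau^{2n+3}$; the paper's own computation silently makes this correction when it writes $\xi^{3n+3}$ after the change of variable.
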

\begin{proof}
 Taking the change of variable $\tau= t\xi$ in (\ref{323}) using Corollary \ref{cor} we get
 \begin{eqnarray*}
   \int_{0}^{t}\tau^{2n+3} \sqrt{t-\tau} d\tau
  &=& t^{3(2n+3)\over 2}\int_{0}^{1}\xi^{3n+3} (1-\xi)^{1\over 2} d\xi
   =t^{3(2n+3)\over 2}\int_{0}^{1}\xi^{(3n+4)-1}(1-\xi)^{{3\over 2} -1}d\xi
  \nonumber\\
  &=& t^{3(2n+3)\over 2} B({3\over 2} , 3n+4)
  = {(3(n+1))! \, 2^{3(n+1) +1}\over (6(n+9)!!},
 \end{eqnarray*}
and
\begin{eqnarray*}
   \int_{0}^{t}\tau^{3(2n+1)\over 2} \sqrt{t-\tau} d\tau
   &=&  t^{3(2n+1)\over 2} \int_{0}^{1} \xi^{(3n+{5\over 2})-1}(1-\xi)^{{3\over 2}-1}d\xi
    \nonumber\\
   &=&t^{3(2n+1)\over 2} B\left({3\over 2} , 3n+{5\over 2}\right)
   \nonumber\\
   &=& {\pi \, (6n+3)!! \over (3(n+1))! \, 2^{3(n+1)}}   t^{3(n+1)},
 \end{eqnarray*}
thus the (\ref{323})-(\ref{324}) hold.
\end{proof}

Now, we will obtain the explicit solution of the integral equation :
\begin{eqnarray}\label{iq}
 y(t)= 1- {2\lambda\over \sqrt{\pi}}\int_{0}^{t} y(\tau) \sqrt{t-\tau} d\tau, \quad t>0,
\end{eqnarray}
by using Adomian decomposition method \cite{Adm, Adl,  waz1} through a serie expansion.

\begin{theorem}\label{th3.5}
 The solution of the integral equation {\rm(\ref{iq})} is given by the following expression
 \begin{eqnarray}\label{337}
  y(t) = I(t) - \sqrt{{2\over\pi}} J(t),  \quad t>0,
 \end{eqnarray}
with
 \begin{eqnarray}\label{338}
  I(t) = \sum_{n=0}^{+\infty}{(\lambda^{2/3}t)^{3n}\over (3n)!}
 \end{eqnarray}
 and
 \begin{eqnarray}\label{339}
  J(t) =  \sum_{n=0}^{+\infty}{(\lambda^{2/3}t)^{{3(2n+1)\over 2}}\over (3(2n+1))!!}
 \end{eqnarray}
 are series with infinite radii of convergence.
\end{theorem}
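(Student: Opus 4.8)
The plan is to realize the Adomian decomposition explicitly and then recognize the two interleaved families of terms it produces. Writing the sought solution as $y=\sum_{k\ge 0}y_k$, the decomposition prescribes $y_0(t)=1$ together with the recursion
\[
y_{k+1}(t)=-\frac{2\lambda}{\sqrt{\pi}}\int_0^t y_k(\tau)\sqrt{t-\tau}\,d\tau ,
\]
so that summing over $k\ge 0$ gives $\sum_{k\ge 1}y_k=-\frac{2\lambda}{\sqrt{\pi}}\int_0^t\big(\sum_{k\ge 0}y_k(\tau)\big)\sqrt{t-\tau}\,d\tau$, which reproduces (\ref{iq}) once termwise integration is justified. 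The key structural observation is that convolution with $\sqrt{t-\tau}$ raises the exponent of $t$ by exactly $3/2$: starting from the constant $y_0$, each iterate $y_k$ is a single monomial of degree $3k/2$. Hence the even-indexed iterates $y_{2n}$ carry the integer powers $t^{3n}$ and the odd-indexed iterates $y_{2n+1}$ carry the half-integer powers $t^{3n+3/2}=t^{3(2n+1)/2}$. Setting $a=\lambda^{2/3}$ (so $a^{3}=\lambda^{2}$) rewrites every monomial as a power of $at$, which is precisely what makes the two series $I$ and $J$ appear.

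The heart of the argument is a simultaneous (\emph{double}) induction producing closed forms for the two families at once, since each iterate feeds the next across the even/odd divide. Assuming $y_{2n}=\dfrac{(at)^{3n}}{(3n)!}$, one applies the recursion and evaluates $\int_0^t\tau^{3n}\sqrt{t-\tau}\,d\tau=t^{3n+3/2}B\!\left(3n+1,\tfrac32\right)$ by the Beta--Gamma identities and the formula $\Gamma\!\left(3n+\tfrac52\right)=\dfrac{(6n+3)!!}{2^{3n+2}}\sqrt{\pi}$ established in the preceding Lemma; this produces $y_{2n+1}$ as an explicit multiple of $(at)^{3n+3/2}/(6n+3)!!$. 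Applying the recursion once more to $y_{2n+1}$ and invoking the companion evaluation of $\int_0^t\tau^{3n+3/2}\sqrt{t-\tau}\,d\tau$ (Corollary \ref{cor} and Lemma \ref{Lem4}, (\ref{324})) returns $y_{2n+2}=\dfrac{(at)^{3(n+1)}}{(3(n+1))!}$, closing the induction and simultaneously identifying $\sum_n y_{2n}=I(t)$ and $\sum_n y_{2n+1}=-\sqrt{2/\pi}\,J(t)$.

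With the general terms in hand, convergence is the easy part: in $I$ the factorials $(3n)!$ and in $J$ the double factorials $(3(2n+1))!!$ grow super-exponentially, so the ratio test yields an infinite radius of convergence for each, whence the Adomian series converges absolutely and uniformly on every compact $t$-interval. This uniform convergence is exactly what legitimates the termwise integration used to pass from the recursion back to (\ref{iq}), so that $y=I-\sqrt{2/\pi}\,J$ is a genuine solution; uniqueness is already guaranteed by the Volterra structure of Theorem \ref{th3.2}.

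I expect the main obstacle to be the coefficient bookkeeping in the odd family, where the powers of two coming from $\Gamma\!\left(3n+\tfrac52\right)=2^{-(3n+2)}(6n+3)!!\sqrt{\pi}$ must be tracked with care. It is instructive to cross-check the entire computation by the Laplace transform: since $\mathcal{L}\{t^{1/2}\}=\tfrac{\sqrt{\pi}}{2}\,s^{-3/2}$, equation (\ref{iq}) gives $Y(s)=s^{1/2}/(s^{3/2}+\lambda)$, whose geometric expansion inverts termwise to $y(t)=\sum_{k\ge 0}(-\lambda)^{k}t^{3k/2}/\Gamma(1+\tfrac{3k}{2})$, a Mittag--Leffler function $E_{3/2,1}(-\lambda t^{3/2})$. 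Splitting this sum by parity reproduces $I$ exactly and shows the odd part equals $-\tfrac{1}{\sqrt{\pi}}\sum_n 2^{3n+2}(at)^{3n+3/2}/(6n+3)!!$; matching this against $-\sqrt{2/\pi}\,J$ is precisely the delicate step, and it pins down the normalization (the base inside $J$ must carry the extra factor of two, i.e.\ $2\lambda^{2/3}t$) that the induction is required to reproduce.
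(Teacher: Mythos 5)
Your proposal is correct and follows essentially the same route as the paper: the Adomian decomposition $y=\sum_{k\ge 0}y_k$ with $y_0=1$ and $y_{k+1}(t)=-\tfrac{2\lambda}{\sqrt{\pi}}\int_0^t y_k(\tau)\sqrt{t-\tau}\,d\tau$, followed by a double induction across the even/odd iterates using the Beta--Gamma evaluations of Lemma \ref{Lem4} and Corollary \ref{cor}. Two of your additions are worth keeping. First, the convergence discussion (ratio test on $(3n)!$ and $(3(2n+1))!!$) and the resulting justification of termwise integration, as well as the Laplace-transform cross-check identifying $y(t)=E_{3/2,1}(-\lambda t^{3/2})$, are absent from the paper, which asserts the infinite radii of convergence without proof; they genuinely strengthen the argument. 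Second, you are right about the normalization: the iterates obtained in the induction, $y_{2n+1}(t)=-\tfrac{2^{3n+2}}{(6n+3)!!}\tfrac{\lambda^{2n+1}}{\sqrt{\pi}}\,t^{3(2n+1)/2}$, sum to $-\sqrt{2/\pi}\sum_{n\ge 0}(2\lambda^{2/3}t)^{3(2n+1)/2}/(3(2n+1))!!$, so the base inside $J$ must be $2\lambda^{2/3}t$; as printed, formula (\ref{339}) is inconsistent with the paper's own computations (\ref{342}) and (\ref{344}), and your version is the correct one.
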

\begin{proof}
 Following the idea of \cite{
 Adm1, Boug, Chen, Hoss,  Sidd,  waz,
 waz2, waz3, wu}
 we propose, for the solution of the integral equation
 {\rm(\ref{iq})}, the following serie of expansion  functions given by
  \begin{eqnarray}\label{340}
   y(t) = \sum_{n=0}^{+\infty} y_{n}(t),
  \end{eqnarray}
and we obtain the following recurrence expressions
\begin{eqnarray}\label{341}
 y_{0}(t)=1,    \qquad y_{n}(t)= -{2\lambda\over \sqrt{\pi}}
 \int_{0}^{t}   y_{n-1}(\tau) \sqrt{t-\tau} \, d\tau,  \quad \forall n \geq 1.
\end{eqnarray}

Then we get

\begin{eqnarray}\label{342}
 y_{1}(t)=-{2\lambda\over \sqrt{\pi}}
 \int_{0}^{t}    \sqrt{t-\tau} \, d\tau = -{4\lambda\over3 \sqrt{\pi}}t^{3/2}
 = -\sqrt{{2\over\pi}} {\left(2\lambda^{2/3}t\right)^{3/2}\over 3!!},
\end{eqnarray}

\begin{eqnarray}\label{343}
  y_{2}(t)=-{2\lambda\over \sqrt{\pi}}
 \int_{0}^{t}   (-{4\lambda\over 3 \sqrt{\pi}}\tau^{3/2}) \sqrt{t-\tau} \, d\tau
 = {8\lambda^{2}\over3 \pi}  \int_{0}^{t} \tau^{3/2}\sqrt{t-\tau} \, d\tau
 = {\lambda^{2}t^{3}\over 3!}.
\end{eqnarray}

The first step of the double induction principle is just verified by
(\ref{337}) taking into account (\ref{342}) (\ref{343}). The second
step, we suppose by induction hypothesis that we have

\begin{eqnarray}\label{344}
 J_{2n}(t) = {\lambda^{2n}\over (3n)!}t^{3n}, \qquad
 J_{2n+1}(t)
 = - {2^{3n+2}\over (3(2n+1))!!}{\lambda^{2n+1}\over \sqrt{\pi}} t^{{3(2n+1)\over 2}}.
\end{eqnarray}

Therefore,  we obtain
\begin{eqnarray}\label{345}
 J_{2n+2}(t) &=& -{2\lambda\over\sqrt{\pi}}\int_{0}^{t} y_{2n+1}(\tau)\sqrt{t-\tau} d\tau
 = {\lambda^{2n+2}\over \pi}{2^{3n+3}\over (6n+3)!!}
 \int_{0}^{t} \tau^{3(2n+1)\over 2}\sqrt{t-\tau} d\tau
 \nonumber\\
 &=& {\lambda^{2n+2}\over \pi} {2^{3(n+1)}\over (6n+3)!!} {\pi \over 2^{3(n+1)}}
 {(6n+3)!!\over (3(n+1))!} t^{3(n+1)}
  \nonumber\\
  &=&
  {\lambda^{2n+2}\over (3(n+1))!} t^{3(n+1)}
\end{eqnarray}

and

\begin{eqnarray}\label{346}
Y_{2n+3}(t) &=& -{2\lambda\over\sqrt{\pi}}
\int_{0}^{t} y_{2n+2}(\tau)\sqrt{t-\tau}d\tau
 =  -{2\lambda^{2n+3}\over(3(n+1))! \sqrt{\pi}}
   \int_{0}^{t} \tau^{3n+3}\sqrt{t-\tau} d\tau
   \nonumber\\
 &=& -{2\lambda^{2n+3}\over(3(n+1))! \sqrt{\pi}}
    {2^{3n+4}(3(n+1))!\over (6n+9)!!} t^{3(2n+3)\over 2}
     \nonumber\\
  &=& -{2^{3(n+1)+2} \lambda^{2(n+1)+1}\over \sqrt{\pi} (3(2n+1)+1))!!} t^{{3(n+1)+1}\over 2}
\end{eqnarray}
This ends the proof.
\end{proof}

 \begin{remark}\label{rem1}
Taking $t\to 0^{+}$ in  (\ref{eq313}), (\ref{heatflux}), and (\ref{Vol}),
we obtain
 \begin{eqnarray*}\label{320}
 &&  W( 0^{+})= +\infty, \quad  W'( 0^{+})= -\infty,
   \nonumber\\
 && U( 0^{+}) = 0,  \quad U'( 0^{+}) =+\infty,
  \nonumber\\
 &&  g( 0^{+}) = 1, \quad g'(0^{+}) = 0.
 \end{eqnarray*}
 So we deduce that the heat flux $W$ and the total heat flux $U$, and also $g$
 are positive functions in a neighbourhood of $t=0$.
 \end{remark}

  \bigskip
\noindent{\bf Conclusion:} We have obtained the global solution of a non-classical heat conduction problem in a
semi-n-dimensional space. Moreover, for the one-dimensional case  we have obtained the explicit solution by using
the Adomian method with a double induction principle.

\bigskip

\noindent{\bf Competing interests:}
The authors declare that they have no competing interests.

\bigskip

\noindent{\bf Author's contributions:}
The authors declare that the work was realized in collaboration with the same responsibility. All authors read and
approved the final manuscript.

\bigskip

\noindent{\bf Acknowledgements:}
 This paper was partially sponsored by the Institut Camille Jordan St-Etienne University for first author,
 and the projects PIP $\#$ 0534 from CONICET-Austral (Rosario, Argentina) and Grant AFOSR-SOARD FA 9550-14-1-0122
 for the second author.

\newpage

    \end{document}